\title[Discreteness of peripheral representations]{Deformation of hyperbolic
manifolds in $\PGL(n,\C)$ and discreteness of the peripheral representations}
\author{Antonin Guilloux} 
\address{Institut de Math\'ematiques de Jussieu \\
Unit\'e Mixte de Recherche 7586 du CNRS \\
Universit\'e Pierre et Marie Curie \\
4, place Jussieu 75252 Paris Cedex 05, France \\}
\email{aguillou@math.jussieu.fr}
\urladdr{http://people.math.jussieu.fr/~aguillou}
\begin{document}

\renewcommand{\H}{\mathbf{H}}
\newcommand{\Z}{\mathbf{Z}}
\newcommand{\R}{\mathbf{R}}
\newcommand{\C}{\mathbf{C}}
\newcommand{\PGL}{\mathrm{PGL}}
\newcommand{\SL}{\mathrm{SL}}
\newcommand{\Hom}{\mathrm{Hom}}
\newcommand{\periph}{\mathrm{periph}}
\newcommand{\geom}{\mathrm{geom}}
\newcommand{\Hol}{\mathrm{Hol}}

\newtheorem{proposition}{Proposition}
\newtheorem{definition}{Definition}
\newtheorem{theorem}{Theorem}
\newtheorem{lemma}{Lemma}
\newtheorem{fact}{Fact}
\newtheorem{remark}{Remark}

\begin{abstract}  
Let $M$ be a cusped hyperbolic $3$-manifold, e.g. a knot complement. Thurston
\cite{thurston-gt3m} showed that the space of deformations of its fundamental
group in $\PGL(2,\C)$ (up to conjugation) is of complex dimension the number
$\nu$ of cusps near the hyperbolic representation. It
seems natural to ask whether some representations remain discrete after
deformation. The answer is generically not. A simple reason for it lies inside
the cusps: the degeneracy of the peripheral representation (i.e.
representations of fundamental groups of the $\nu$ peripheral tori). They indeed
generically become non discrete, except for a countable
set. This last set corresponds to hyperbolic Dehn surgeries on $M$, for which
the peripheral representation is no more faithful.

We work here in the framework of $\PGL(n,\C)$. The hyperbolic structure lifts,
via the $n$-dimensional irreducible representation, to a representation
$\rho_{\geom}$. We know from the work of Menal-Ferrer and Porti
\cite{MenalFerrer-Porti} that the space of deformations of
$\rho_\textrm{geom}$ has complex dimension
$(n-1)\nu$. 

We prove here that, unlike the $\PGL(2)$-case, the generic behaviour becomes the
discreteness (and faithfulness) of the peripheral representation: in a
neighbourhood of the geometric representation, the non-discrete peripheral
representations are contained in a real analytic subvariety of codimension $\geq 1$.
\end{abstract}

\maketitle

\section{Introduction}

Let $M$ be a complete orientable hyperbolic $3$-manifold with $\nu\geq 1$ cusps,
e.g. a
knot complement. For a Lie group $G$, consider the space $\chi(M,G)$ of
representations of its fundamental group modulo conjugacy:
$$\chi(M,G)=\Hom(\pi_1(M),G)/G.$$
Let $T_1,\ldots,T_\nu$ be the peripheral tori of $M$ (so that the cusps are of
the form $T_i\times[0,\infty)$). We choose once for all a longitude $l_i$ and a
meridian $m_i$ for each of them.

In the case $G=\PGL(2,\C)$, a natural point to consider in the space
$\chi(M,\PGL(2,\C))$ is the (class of the) monodromy $[\rho_\textrm{hyp}]$ of
the hyperbolic
structure. Following Thurston \cite{thurston-gt3m}, Neumann-Zagier \cite{neumann-zagier} showed
it is a smooth point and the complex dimension of $\chi$ at this point equals to
the number $\nu$ of cusps (see also \cite{kapovich}). Moreover the (hyperbolic) length of the longitudes
$l_i$ -- or  directly related parameters as the trace of their holonomies
-- are natural local parameters for this space. In other words, in a
neighbourhood of $[\rho_{\textrm{hyp}}]$, the deformations are described by their
restriction on the tori. This restriction, called peripheral
representation, will be a central object in this paper:
\begin{definition}
For any $\rho \in \chi(M,G)$, its \emph{peripheral representation} 
$\rho_{\periph}$ is the collection of the restrictions of $\rho$ to
$\pi_1(T_i)$. 
\end{definition}
The above mentioned phenomenon -- that the peripheral representations prescribe the whole representation -- is called \emph{local rigidity} (around
$[\rho_{\textrm{hyp}}]$).

Still in the $\PGL(2,\C)$-case, we know which representations remain
discrete in a neighbourhood of the hyperbolic one \cite{thurston-gt3m,
neumann-zagier}. Indeed, after deformation of the hyperbolic representation,
the new
representation becomes
generically non discrete. And a simple reason for it lies in the
peripheral representations: they already are not discrete, except for a
countable set. This last set corresponds to hyperbolic Dehn surgeries on $M$
(or a finite covering) for
which the whole representation is indeed discrete. Beware that in this situation
the peripheral representations are no more faithful.

We work in this paper in the framework of $\PGL(n,\C)$. The hyperbolic structure
lifts, via the $n$-dimensional irreducible representation $$r_n\: : \:
\PGL(2,\C)\to \PGL(n,\C),$$ to an irreducible representation
$\rho_{\geom}=r_n\circ \rho_\textrm{hyp}$ called the \emph{geometric
representation}.
Let us mention that, when $n=3$, the irreducible representation $r_3$ is more
widely known as the adjoint representation $\textrm{Ad}$. The problem of local
rigidity around $\rho_\geom$ has already been studied by Menal-Ferrer and
Porti in
\cite{MenalFerrer-Porti} and shown to hold also for
$G=\PGL(n,\C)$, see theorem \ref{thm:mf-p}. We hence know that the space of deformations near $\rho_{\geom}$
has complex dimension $(n-1)\nu$ and that the symmetric functions of the eigenvalues of
$\rho(l_i)$ are local
parameters for $\chi(M,\PGL(n,\C))$ -- see fact \ref{local-rigidity} for a
precise statement. Let us also mention that the paper \cite{BFG-rigidity}
recovers this theorem for $n=3$. Its approach to the problem, following
\cite{BFG}, leads to actual computations in $\chi(M,\PGL(3,\C))$. In the last
section, we will present the example of the $8$-knot complement.

We prove here that
the peripheral representations are generally discrete: in a neighbourhood of the
geometric representation, the non-discrete peripheral representations are
contained in a real analytic subvariety, see theorem \ref{th:main}. The motto for the proof
is that, $\PGL(n,\C)$ being of rank $n-1$, there is enough room to construct
discrete and faithful representations of the commutative groups $\pi_1(T_i)$ as soon as $n\geq 3$. It
is worth insisting here on the fact that this motto should be
carefully implemented. The examples show that peripheral discreteness does not
hold generically around any unipotent representation. This is why we concentrate
our work on the geometric representation even if the techniques may be used
to deal with other unipotent representations.

It raises an interesting question: in the $\PGL(2,\C)$-case, in the
neighbourhood
of the hyperbolic structure, the peripheral representations are discrete if
\emph{and only if} the whole representation corresponds to a hyperbolic Dehn
filling (or a ramified covering) and is therefore discrete. So there is a local
equivalence between the
peripheral discreteness and the discreteness of the whole
representation\footnote{I do not know a direct proof of it -- that is without
using  Mostow rigidity. I think it would be very interesting to
investigate if we might avoid it.}. Does the same hold in $\PGL(n,\C)$ ? It
would have the surprising consequence that generically, in a neighbourhood of
the geometric representation, the deformed representation remains discrete.

\section{Peripheral representations}

First of all, via the $n$-dimensional irreducible representation $r_n\:
: \: \PGL(2,\C)\hookrightarrow \PGL(n,\C)$, we always consider
$\chi(M,\PGL(2,\C))$ as a subset of $\chi(M,\PGL(n,\C))$. From now on, we
denote this last space by $\chi$, as $n\geq 3$ remains fixed.

We will always assume that our manifold $M$ has only one cusp and therefore drop the index $i$: the peripheral torus is denoted $T$ and $l$ and $m$ are its chosen longitude and meridian.
This simplifies notations without hiding any difficulty.
We will occasionally explain what should be adapted for the case of $\nu$ cusps.

\subsection{Local rigidity in $\chi$}

Let $\rho$ be the representative of an element $[\rho]$ in $\chi(M,\PGL(n,\C))$.
 Menal-Ferrer and
Porti \cite{MenalFerrer-Porti} proved the local rigidity around $[\rho_\geom]$
in $\chi(\pi_1(M),\SL(n,\C))$:
\begin{theorem}[Menal-Ferrer and Porti]\label{thm:mf-p}
 Around $[\rho_\geom]$, the variety $\chi(\pi_1(M),\SL(n,\C))$ is a complex
manifold of dimension $(n-1)$ for which the symmetric functions of the
eigenvalues of the $\rho(l)$ are local parameters.

In particular any $[\rho]$ close enough of $[\rho_\geom]$ is completely
determined by its peripheral representations.
\end{theorem}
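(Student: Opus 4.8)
Let me propose a proof plan for Theorem~\ref{thm:mf-p}.

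The plan is to run the standard deformation-theoretic machine and to locate the single point where genuine hyperbolic geometry has to enter. First I would invoke Weil's description of the Zariski tangent space. Since $\rho_{\geom}=r_n\circ\rho_{\mathrm{hyp}}$ is irreducible, Schur's lemma gives $H^0\big(\pi_1(M);\mathfrak{sl}(n,\C)_{\mathrm{Ad}\,\rho_{\geom}}\big)=0$; hence the conjugation orbit of $\rho_{\geom}$ is closed of maximal dimension and the Zariski tangent space to $\chi(\pi_1(M),\SL(n,\C))$ at $[\rho_{\geom}]$ is $H^1\big(\pi_1(M);\mathfrak{sl}(n,\C)_{\mathrm{Ad}\,\rho_{\geom}}\big)$. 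Everything then reduces to computing this twisted cohomology of the compact core and comparing it with that of the boundary.

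Second, I would compute the boundary cohomology. Completeness of the hyperbolic structure forces $\rho_{\mathrm{hyp}}(\pi_1(T))$ to lie inside a single one-parameter unipotent subgroup of $\PGL(2,\C)$ -- the parabolics fixing the cusp point -- so its image under the irreducible representation $r_n$ consists of \emph{regular} unipotent elements of $\SL(n,\C)$, all inside one one-parameter unipotent subgroup. Thus $H^0(T;\mathfrak{sl}(n,\C))$ is the centralizer inside $\mathfrak{sl}(n,\C)$ of a single regular nilpotent $N$, namely the span of $N,N^2,\dots,N^{n-1}$, of dimension $n-1$. As $T$ is a torus its Euler characteristic vanishes, so $\dim H^1(T)=\dim H^0(T)+\dim H^2(T)$, and Poincar\'e duality with the self-dual coefficients $\mathfrak{sl}(n,\C)$ (Killing form) gives $\dim H^2(T)=\dim H^0(T)=n-1$; altogether $\dim H^1(T;\mathfrak{sl}(n,\C))=2(n-1)$ (and $2(n-1)\nu$ for $\nu$ cusps).

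Third comes the ``half lives, half dies'' lemma: Poincar\'e--Lefschetz duality for the pair $(M,\partial M)$, together with its long exact sequence and the fact that $[\partial M]=0$ in $H_2(M)$, shows that the image of the restriction $i^*\colon H^1(M;\mathfrak{sl}(n,\C))\to H^1(\partial M;\mathfrak{sl}(n,\C))$ is Lagrangian for the symplectic, Killing-twisted cup-product pairing on $H^1(\partial M)$, hence of dimension exactly $n-1$. The crux -- and the step I expect to be the genuine obstacle -- is to prove that $i^*$ is moreover \emph{injective}, i.e. that $\rho_{\geom}$ carries no infinitesimal deformation supported in the interior of $M$. This is the place where the hyperbolic structure is really used: one needs an $L^2$/Hodge-theoretic vanishing theorem for twisted harmonic forms on the finite-volume manifold $M$, in the spirit of Raghunathan and Matsushima--Murakami but adapted to the cusps, exploiting that $\mathfrak{sl}(n,\C)$ decomposes under the principal $\mathfrak{sl}_2$ into irreducible summands of ``spins'' $1,\dots,n-1$ (none of them trivial) and that the peripheral holonomy is again regular unipotent. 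Granting this vanishing, the previous two steps yield $\dim H^1(M;\mathfrak{sl}(n,\C))=n-1$ and $i^*$ an isomorphism onto its Lagrangian image.

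Finally I would upgrade this infinitesimal rigidity to the stated local rigidity. The map $\chi(\pi_1(M),\SL(n,\C))\to\C^{\,n-1}$ recording the elementary symmetric functions $\sigma_1(\rho(l)),\dots,\sigma_{n-1}(\rho(l))$ of the eigenvalues of $\rho(l)$ factors through restriction to the cusp; its differential at $[\rho_{\geom}]$ is $i^*$ followed by the differential, at $\rho_{\geom}|_T$, of those symmetric functions on the torus character variety, and one checks a Neumann--Zagier-type non-degeneracy -- already visible on the irreducible summands of $\mathfrak{sl}(n,\C)$ under the principal $\mathfrak{sl}_2$ -- saying that this second map is injective on the Lagrangian $\mathrm{im}(i^*)$. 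The composite then has bijective differential at $[\rho_{\geom}]$, so $\dim_{[\rho_{\geom}]}\chi\le n-1$; the reverse inequality $\dim_{[\rho_{\geom}]}\chi\ge n-1$ -- obtained either by exhibiting enough honest analytic deformations or by checking that the relevant deformation obstructions in $H^2$ vanish -- then pins down $\chi(\pi_1(M),\SL(n,\C))$ to be, near $[\rho_{\geom}]$, a smooth complex manifold of dimension $n-1$ on which $\sigma_1(\rho(l)),\dots,\sigma_{n-1}(\rho(l))$ form a local coordinate system. The concluding assertion is then immediate: close to $[\rho_{\geom}]$, the class $[\rho]$ is recovered from its peripheral representation $\rho_{\periph}$.
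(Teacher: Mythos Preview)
The paper does not prove this theorem: it is quoted as a result of Menal-Ferrer and Porti \cite{MenalFerrer-Porti} and used as a black box for everything that follows, so there is no ``paper's own proof'' to compare your proposal against.

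That said, your outline is essentially the strategy of \cite{MenalFerrer-Porti}: Weil's identification of the tangent space with $H^1\big(\pi_1(M);\mathfrak{sl}(n,\C)_{\mathrm{Ad}\,\rho_\geom}\big)$, the computation of the boundary $H^1$ from the regular unipotency of the peripheral holonomy, the Poincar\'e--Lefschetz ``half lives, half dies'' argument giving a Lagrangian image, and an $L^2$/Hodge vanishing theorem (of Raghunathan and Matsushima--Murakami type, adapted to finite volume with cusps) for the injectivity of the restriction map. You correctly single out this vanishing step as the place where genuine hyperbolic geometry enters; in \cite{MenalFerrer-Porti} it is indeed the point requiring the most work. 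The only caveat is that your sketch leaves this step at the level of an expectation rather than an argument, so as written it is a correct plan rather than a proof.
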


\begin{remark}
This result is proven when there are $\nu$ cusps. In this case, the dimension of the manifold becomes $(n-1)\nu$. For parameters, you have to choose a longitude in each peripheral torus and consider the symmetric functions of their eigenvalues.
\end{remark}

In this paper, we prefer to work with the group $\PGL(n,\C)$ and the space
denoted by $\chi$. Let us remark that the previous theorem translate
directly into a theorem about $\chi$ in the neighbourhood of the geometric
representation by the following trick: choose a finite generating set $S$ for $\pi_1(M)$. The geometric representation lifts to $\SL(n,\C)$. Hence for any representation $\rho\in\Hom(\pi_1(M),\PGL(n,\C))$ close enough to $\rho_{\geom}$, there is a unique lift $\rho'(g)\in\SL(n,\C)$ of every $\rho(g)$, $g\in S$, such that $\rho'(g)$ is close to $\rho_\geom(g)$. This defines the representation $\rho'\in\Hom(\pi_1(M),\SL(n,\C))$ which is a lift of $\rho$ and verifies that the
eigenvalues of $\rho'(l)$ and $\rho'(m)$ are close to $1$.

Hereafter, we abuse the notations and still speak about the symmetric functions
of the eigenvalues of $\rho$ whereas we should more precisely speak about those
of $\rho'$.

\subsection{A ramified covering of $\chi$}

Let $T$ be the boundary torus and $l$, $m$ be the fixed longitude and meridian. It will be convenient for our purpose to fix the upper-triangular form of
$\rho(l)$ and $\rho(m)$, which amounts to choosing an order on their
eigenvalues. This will be done classically by passing to a finite ramified
covering of $\chi$ describing the space of representations decorated by flags
fixed by the peripheral representations. 

Note
that $\rho(l)$
and $\rho(m)$ commute. So they are simultaneously trigonalizable over $\C$. At
$\rho=\rho_\geom$, these matrices are unipotent and they have a regular
upper-triangular form: there is a unique complete flag $F_T$ in $\C^n$ they
fix. Stated in a more concrete way, they are conjugated to upper triangular
matrices, with diagonal
entries equal to $1$ and non-zero entries above the diagonal (it is a unique
Jordan bloc of size $n$). After a small
deformation, these matrices generically become simultaneously diagonalizable
with distinct eigenvalues. Hence they fix $n!$ different flags in $\C^n$. Each
of these flags is near $F_T$, or in other terms you have a choice of $n!$ distinct
upper-triangular representatives for $(\rho(l),\rho(m))$. 

Let
$\mathcal {F}$ be the space of complete flags in $\C^n$. Define now the
space $\chi'$ by:

\smallskip

$\left\{(\rho,F)\in\Hom(\pi_1(M),\PGL(n,\C))\times\mathcal{F}\textrm{ such that 
}\hspace{1cm}\right.$\\
$\left.\hspace{6cm}\rho(\pi_1(T)).F=F\right\}/{\PGL(n,\C)}.$\\

\smallskip

\noindent There is a natural projection $\chi'\to\chi$ given by forgetting about the flag.
Moreover, as at $[\rho_\geom]$ the peripheral representation fixes only one flag
(it is regular unipotent) $F_T$, the fiber of $[\rho_\geom]$ is exactly the
point $[\rho_\geom,F_T]$. For the sake of simplicity we will sometimes abuse notation and
still denote $[\rho_\geom]$ this point of $\chi'$.

In some sense, $\chi'$ is exactly the space where one may speak of the
eigenvalues of $\rho(l)$ and not only their symmetric functions: indeed,
consider $[\rho,F]\in\chi'$. Then there are complex numbers $L_k$ and $M_k$ for
$1\leq k\leq n-1$, such that for any basis adapted to the flag $F$, the
matrices $\rho(l)$ and $\rho(m)$ are simultaneously upper-triangular with:
\begin{equation}\label{trig}
 \rho(l)=\begin{pmatrix}
1&*&*&*&*\\&L_1&*&*&*\\&&L_1L_2&*&*\\&&&\ddots&*\\&&&&L_1\ldots
L_{n-1} \end{pmatrix}
\end{equation}
and\footnote{The $L_k$'s are given by the usual choice of simple
positive
roots for the diagonal torus associated to the upper triangular group.}
\begin{equation*}\rho(m)=\begin{pmatrix}
1&*&*&*&*\\&M_1&*&*&*\\&&M_1M_2&*&*\\&&&\ddots&*\\&&&&M_1\ldots
M_{n-1}\end{pmatrix}.
 \end{equation*}
  
This defines an
application $\Hol_\periph$ on the space $\chi'$\footnote{When there are different peripheral tori $(T_i)_{i=1..\nu}$, we define in the
obvious way the functions $L_k^i$, $M_k^i$ and $\Hol_\periph$.}:
\begin{eqnarray*}
\Hol_\periph & : & \chi' \to  (\C^{(n-1)})^2
\\&&[\rho,F]\mapsto((L_k[\rho,F])_{k}, (M_k[\rho,F])_{k})
\end{eqnarray*}

We claim that $\chi'$ is a ramified covering of the
space $\chi$, with covering group the Weyl
group of $\PGL(n,\C)$. The latter is also the permutation
group of $n-1$ points. This ramified covering is often called the space of
decorated representations
(\cite{BFG-rigidity}, for example). This claim is proven in the following fact and is
a consequence of Menal-Ferrer and Porti theorem:
\begin{fact}\label{local-rigidity}
The space $\chi'$ is a ramified covering of $\chi$ around $[\rho_\geom]$.

Moreover the map $\Hol_\periph$ is a local holomorphic embedding into $\C^{2(n-1)}$. Its image is locally a complex manifold of dimension
$(n-1)$. The collection $(L_k[\rho])_{k}$ is a local
parameter for this submanifold in the neighbourhood of
$\Hol_\periph[\rho_\geom]$.
\end{fact}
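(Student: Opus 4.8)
The plan is to deduce everything from Theorem~\ref{thm:mf-p}: the point is that the passage from $\chi$ to $\chi'$ is, near $[\rho_\geom]$, nothing but the classical branched covering of the maximal torus of $\PGL(n,\C)$ by its Weyl group $W\cong S_n$. First I would record what Theorem~\ref{thm:mf-p} together with the lifting trick give: near $[\rho_\geom]$ the space $\chi$ is a smooth complex $(n-1)$-fold, a holomorphic chart being $\mathbf{s}=(\sigma_1,\dots,\sigma_{n-1})$, where $\sigma_j=\sigma_j(\rho'(l))$ is the $j$-th elementary symmetric function of the eigenvalues of the $\SL(n,\C)$-lift $\rho'$ introduced above; the $\sigma_j(\rho'(m))$ are then further holomorphic functions on $\chi$.

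Next I would unwind the definition of $\chi'$ near $[\rho_\geom,F_T]$. A nearby point is a pair $([\rho],F)$ with $F$ a complete flag close to $F_T$ fixed by $\rho(\pi_1(T))$. Since $\rho_\geom(l)$ is regular unipotent (a single Jordan block) and regularity is a Zariski-open condition, $\rho'(l)$ stays regular for $\rho$ close to $\rho_\geom$; its centraliser is then $\C[\rho'(l)]$, so $\rho'(m)\in\C[\rho'(l)]$ and hence $F$ is fixed by $\rho(\pi_1(T))$ if and only if it is fixed by $\rho'(l)$ alone. A regular matrix has only finitely many invariant complete flags, at most $n!$ of them, and exactly $n!$ precisely when it is regular semisimple, in which case $W$ permutes them simply transitively. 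Reading off \eqref{trig}, on $\chi'$ the functions $(L_1,\dots,L_{n-1})$ are exactly the consecutive ratios of the eigenvalues of $\rho'(l)$ taken in the order prescribed by $F$; that is, $\Phi_L:=(L_k)$ records the eigenvalue datum of $\rho(l)$ as a point of the maximal torus $T\cong(\C^*)^{n-1}$ of $\PGL(n,\C)$, and similarly $(M_k)$ for $\rho(m)$. Consequently, writing $\pi\colon\chi'\to\chi$ for the forgetful projection and $q\colon T\to T/W$ for the Weyl-group quotient, one gets $\mathbf{s}\circ\pi=q\circ\Phi_L$ once $T/W$ is identified near the identity with its image in $\C^{n-1}$ through the symmetric functions; here $q$ is a branched covering of degree $n!$, unramified exactly where the $n$ eigenvalues are distinct and totally ramified at the identity $\Phi_L[\rho_\geom,F_T]=(1,\dots,1)$.

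The conclusion would then come from a degree count. Because $\rho'(l)$ is regular the fibres of $\pi$ are finite, and since the flag variety is proper $\pi$ is a finite morphism near $[\rho_\geom,F_T]$; its generic degree is the number of invariant complete flags of a regular semisimple element, namely $n!$, and as the fibre of $\pi$ over $[\rho_\geom]$ is the single point $[\rho_\geom,F_T]$ only one local branch of $\chi'$ is involved. From $\mathbf{s}\circ\pi=q\circ\Phi_L$ with $\mathbf{s}$ a biholomorphism and $\deg q=n!=\deg\pi$, one reads $\deg\Phi_L=1$, so $\Phi_L$ is a finite degree-one holomorphic map onto a neighbourhood of $(1,\dots,1)$ in $\C^{n-1}$; the target being normal, it is a biholomorphism. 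Hence $\chi'$ is smooth of dimension $n-1$ near $[\rho_\geom,F_T]$, the collection $(L_k)$ is a local holomorphic chart on it, and in this chart $\pi$ is the $W$-quotient $q$ — i.e. $\chi'$ is a ramified covering of $\chi$ with covering group $W$.

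It only remains to identify $\Hol_\periph$. Since $(L_k)$ is a local chart on $\chi'$, every holomorphic function on $\chi'$ — in particular each $M_k$ — is a holomorphic function of $(L_k)$; thus $\Hol_\periph=\bigl((L_k),(M_k)\bigr)$ is the graph of a holomorphic map, so it is a local holomorphic embedding of $\chi'$ into $\C^{2(n-1)}$, its image is a complex $(n-1)$-dimensional submanifold, and $(L_k)$ is a global coordinate on that image, as claimed. The step I expect to require the most care is the assertion, used in the degree count, that $\chi'$ is a \emph{reduced} complex analytic space near $[\rho_\geom,F_T]$ and that the $\PGL(n,\C)$-quotient is well behaved there (so that the finite degree-one holomorphic map $\Phi_L$ onto the normal space $\C^{n-1}$ is forced to be an isomorphism, and so that $\pi$ is genuinely finite over the discriminant locus where $\rho(l)$ acquires repeated eigenvalues). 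This is exactly where one uses that one is deforming the \emph{regular} unipotent $\rho_\geom(l)$: for a non-regular unipotent peripheral holonomy the analogous statement fails, consistently with the warning in the introduction.
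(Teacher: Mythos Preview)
Your proposal is correct and shares the paper's overall architecture: the same commutative square relating the forgetful map $\pi\colon\chi'\to\chi$, the eigenvalue map $\Phi_L=(L_k)$, the symmetric-function chart $\mathbf{s}$ on $\chi$ coming from Theorem~\ref{thm:mf-p}, and the Weyl quotient $q$, together with the crucial observation that $\rho(l)$ stays \emph{regular} near $\rho_\geom$. The execution, however, differs. The paper does not argue by degree; instead it proves directly that $[\rho,F]\mapsto(L_k)_k$ is \emph{injective} by explicitly reconstructing the flag $F$ from $[\rho]$ and the chosen ordering of eigenvalues. The point is a short lemma: near $\rho_\geom$ every eigenvalue of $\rho(l)$ has one-dimensional eigenspace (a single Jordan block per eigenvalue), so inside each characteristic subspace there is a \emph{unique} $k$-plane invariant under $\rho(l)$, and these assemble into the unique admissible flag. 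Once that injectivity is in hand, the identification of $\pi$ with the classical $W$-cover $q$ is immediate from the diagram.

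Your degree count ($\deg\pi=\deg q=n!\Rightarrow\deg\Phi_L=1$, then normality of $\C^{n-1}$) is a legitimate alternative and arguably more conceptual, but as you yourself flag, it leans on knowing that $\chi'$ is a reduced, irreducible analytic space near $[\rho_\geom,F_T]$ so that ``finite of degree one onto normal'' upgrades to ``isomorphism''. The paper's hands-on injectivity argument sidesteps that concern: it produces a set-theoretic bijection between $\chi'$ and the fibre product $\chi\times_{T/W}T$, which is automatically the ramified cover pulled back along the biholomorphism $\mathbf{s}$. In short, both routes hinge on regularity of $\rho(l)$; the paper spends it on uniqueness of the invariant flag filtration, you spend it on finiteness of $\pi$ and the count $|\pi^{-1}(\cdot)|\le n!$.
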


\begin{remark}
As for theorem \ref{thm:mf-p}, the generalisation to $\nu$ cusps is valid, with the image of $\Hol_\periph$ being locally a manifold of dimension $(n-1)\nu$ in $\C^{2(n-1)\nu}$
\end{remark}
As in the $\PGL(2,\C)$-case \cite{Choi}, we really need $M$ to be hyperbolic
and to work in a neighbourhood of the geometric representation for this
statement
to hold. Note that we give in \cite{BFG-rigidity} an actual description (for
$n=3$) of a neighbourhood of $[\rho_\geom]$ for which it holds (and
counterexamples without the assumptions).

\begin{proof}
 For $1\leq k\leq n-1$, denote by $\sigma_k\: : \: \C^n\to \C$ the $k$-th
symmetric function of $n$ complex numbers and, with a slight abuse of notations,
still denote by $\sigma_k$ the map which sends a matrix  to the $k$-th
symmetric function of its eigenvalues. Then we have two lines of applications:
$$\begin{matrix}
  \chi' &\xrightarrow{\Hol_\periph}& (\C^{n-1})^2 &\to &\C^{n-1}\\
[\rho,F]&\mapsto&((L_k[\rho,F])_k,(M_k[\rho,F])_k)&\mapsto &(L_k[\rho,F])_k,
  \end{matrix}
$$
and
$$\begin{matrix}
  \chi & \to & (\C^{n-1})^2 &\to &\C^{n-1}\\
[\rho] &\mapsto &((\sigma_k(\rho(l)))_k,(\sigma_k(\rho(m)))_k)&\mapsto&
(\sigma_k(\rho(l)))_k.
  \end{matrix}
$$
In a neighbourhood of $1\in\C$, we denote by $z^{\frac{1}{n}}$ the
branch of $n$-th root sending $1$ to $1$. We may then define, in a
neighbourhood of $(1,\ldots,1)$ in $\C^{n-1}$ the map:
$$e:\:\left\{\begin{matrix}\C^{n-1}&\to&\C^n\\
  
(a_1,\ldots,a_{n-1})&\mapsto&\frac{1}{\left(a_1^{n-1}a_2^{n-2}\ldots
a_{n-1}\right)^{\frac 1n}}(1,a_1,a_1a_2,\ldots,a_1a_2\ldots a_{n-1})
  \end{matrix}\right.$$
This map seems complicated mostly because of the choices in the definition of the $L_k$'s and $M_k$'s. We stick with this notation as they will be adapted for the remainder of the paper. In simple terms, this map describes the eigenvalues of $\frac{1}{\det(A)^\frac{1}{n}}A$,
where $A$ is the matrix:
$$ A=\begin{pmatrix}
1&*&*&*&*\\&a_1&*&*&*\\&&a_1a_2&*&*\\&&&\ddots&*\\&&&&a_1\ldots
a_{n-1} \end{pmatrix}.$$
By construction the two above lines fit into the commutative diagram:
$$\begin{matrix}
   \chi' &\xrightarrow{\Hol_\periph}& (\C^{n-1})^2 &\to &\C^{n-1}\\
   \downarrow&&\downarrow&&\downarrow\\
   \chi & \to & (\C^{n-1})^2 &\to &\C^{n-1}\\
  \end{matrix},
$$
where the last two vertical arrows are constructed with the map 
$$\begin{matrix}\C^{n-1}&\to&\C^{n-1}\\
   (a_1,\ldots,a_{n-1})&\mapsto&(\sigma_k(e(a_1,\ldots,a_{n-1})))_{1\leq k\leq
n-1}.
  \end{matrix}$$
  
Now, what should we prove? The theorem \ref{thm:mf-p} of Menal-Ferrer and Porti tells us the
structure of the second line around $[\rho_\geom]$:  the first arrow
biholomorphically sends an open set of $\chi$ onto a
submanifold of $(\C^{n-1})^2$ of dimension $\C^{n-1}$ and for which the second
arrow gives local parameters.
The last vertical arrow is the classical ramified covering with covering group
the permutation group of $n-1$ points. So we just need to prove that the
composition of the two arrows of the first line is injective: indeed, if this
holds, the projection $\chi'\to \chi$ is isomorphic to the classical ramified
covering.

So consider $[\rho]\in\chi$, and $(\sigma_k(\rho(l)))_k$ the vector of symmetric
functions of the eigenvalues of $\rho(l)$. Let $(L_1,\ldots,L_{n-1})$ be a
preimage of this vector by the third vertical arrow. Then we want to prove that
there exists a
\emph{unique} flag $F$, such that $[\rho,F]$ is sent to $(L_1,\ldots,L_{n-1})$.
In terms of the vector $(L_1,\ldots,L_{n-1})$, the eigenvalues of $\rho(l)$ are
$1$, $L_1$, $\ldots$, $L_1\cdots L_{n-1}$ (see eq. \ref{trig}). If those are distinct,
there is no problem:
$F$ has to be the flag whose $k$-dimensional space is the sum of the eigenlines
associated to the $k$ first eigenvalues $1$, $L_1$, $\ldots$, $L_1\cdots L_k$. This is the generic case.

But problems may occur when some eigenvalues coincide: for example if the
eigenspace associated to $1$ has dimension $\geq 2$, we would have different
possible choices for the first line of $F$. We claim this does not happen in a
neighbourhood of $[\rho_\geom]$: even if an eigenvalue has multiplicity\footnote{Let us precise the notations: the multiplicity of an eigenvalue is intended as its multiplicity as a root of the characteristic polynomial. We will say that an eigenvalue is simple if the dimension of its eigenspace is $1$. Beware that an eigenvalue can at the same time be simple and have a multiplicity $\geq 2$.} greater than $2$, its
eigenspace will still be a line:
\begin{lemma}\label{lem:jordan}
 For $\rho$ close enough to $\rho_\geom$, for an eigenvalue
$\lambda$ of multiplicity $r$ of $\rho(l)$, and an integer $1\leq k\leq r$,
there is a unique $k$-plane invariant by $\rho(l)$ inside the characteristic
space associated to $\lambda$.
\end{lemma}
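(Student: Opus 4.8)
The plan is to isolate the one algebraic feature of $\rho_\geom(l)$ that forces the conclusion, observe that this feature is \emph{open} (so it survives small deformations), and then read off the invariant subspaces inside a characteristic space by pure linear algebra. The feature in question is \emph{regularity}, or \emph{non-derogatoriness}: the matrix $\rho_\geom(l)$ is regular unipotent, i.e.\ a single Jordan block of size $n$, so every one of its eigenvalues has a one-dimensional eigenspace.

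First I would check that $\rho(l)$ remains non-derogatory for $\rho$ near $\rho_\geom$. A matrix $A\in M_n(\C)$ is non-derogatory precisely when its minimal polynomial has degree $n$, that is, when $\mathrm{Id},A,\ldots,A^{n-1}$ are linearly independent in $M_n(\C)$; and this holds as soon as one of the maximal minors of the $n^2\times n$ matrix whose columns are the coordinate vectors of $\mathrm{Id},A,\ldots,A^{n-1}$ is nonzero. Each such minor is a polynomial in the entries of $A$, so the set of non-derogatory matrices is Zariski-open, a fortiori open in the usual topology, and it contains $\rho_\geom(l)$. Hence for $\rho$ close enough to $\rho_\geom$ the matrix $\rho(l)$ is still non-derogatory: each eigenvalue of $\rho(l)$ has a one-dimensional eigenspace. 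Only $\rho(l)$ enters the statement, so this is the sole smallness condition needed; it is automatic on the cover $\chi'$ and with $\nu$ cusps.

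Next, fix such a $\rho$, an eigenvalue $\lambda$ of multiplicity $r$, and let $V_\lambda=\ker(\rho(l)-\lambda\,\mathrm{Id})^n$ be its characteristic space; it is $\rho(l)$-invariant of dimension $r$, and $\rho(l)|_{V_\lambda}=\lambda\,\mathrm{Id}+N$ with $N$ nilpotent. Since $\ker N=\ker(\rho(l)-\lambda\,\mathrm{Id})\subseteq V_\lambda$ has dimension $1$ by the previous step, $N$ is a single nilpotent Jordan block, necessarily of size $r$. Then I would invoke the elementary fact that the subspaces invariant under a single nilpotent Jordan block $N$ of size $r$ are exactly the $\ker N^k$, $0\leq k\leq r$, one of each dimension: if $W$ is $N$-invariant and $s$ is the nilpotency index of $N|_W$, choosing $w\in W$ with $N^{s-1}w\neq 0$ shows $w,Nw,\ldots,N^{s-1}w\in W$ are independent, so $\dim W\geq s$, while $W\subseteq\ker N^s$ and $\dim\ker N^s=\min(s,r)=s$, so $\dim W\leq s$; hence $W=\ker N^s$. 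Since a subspace of $V_\lambda$ is $\rho(l)$-invariant iff it is $N$-invariant, for each $1\leq k\leq r$ there is a unique invariant $k$-plane inside $V_\lambda$, namely $\ker N^k$.

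The only point requiring care — the "main obstacle", such as it is — is the choice of invariant in the first step: the naive property "$\rho(l)$ is a single Jordan block with a single eigenvalue" is \emph{not} stable, since the eigenvalues generically split apart under deformation; what is stable is the weaker assertion that every eigenvalue has a one-dimensional eigenspace, and it is precisely this weaker assertion that suffices. Everything past that step is routine linear algebra.
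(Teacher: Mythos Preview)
Your argument is correct and follows the same two-step outline as the paper: first establish that ``every eigenvalue of $\rho(l)$ has one-dimensional eigenspace'' is an open condition holding at $\rho_\geom$, then deduce the unique chain of invariant subspaces in each characteristic space. Your version is in fact a repair of the paper's own proof: the paper claims the open condition is ``the characteristic polynomial and its derivative are coprime'', but that characterizes matrices with \emph{pairwise distinct} eigenvalues and visibly fails at $\rho_\geom(l)$, whose characteristic polynomial is $(x-1)^n$. Your formulation via non-derogatoriness --- $\mathrm{Id},A,\ldots,A^{n-1}$ linearly independent, hence a Zariski-open condition --- is the correct one, and it does hold at a regular unipotent element. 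You also spell out the classification of invariant subspaces for a single nilpotent Jordan block, which the paper only asserts; this is routine but worth having.
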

\begin{proof}
This amounts to say that the Jordan decomposition of $\rho(l)$ is given by a
unique block for each eigenvalue. This is equivalent to the fact that every
eigenvalue is simple, i.e. its eigenspace is a line. This is true at $[\rho_\geom]$, as $\rho_\geom(l)$ is regular unipotent, and it is an open condition
given by the property: 
"the
characteristic polynomial and its derivative are coprime polynomials". So it holds in a
neighbourhood of $[\rho_\geom]$.
\end{proof}

With this lemma, one concludes the proof: the $k$-dimensional space of $F$
has to be constructed in the following way: consider the $k$ first eigenvalues
$1,L_1,\ldots,L_1\cdots L_k$. For each eigenvalue $\lambda$ appearing,
denote by $r_\lambda$ the number of times it appears. And define the $k$-plane as the
sum of the unique $r_\lambda$-planes invariant by $\rho(l)$ inside the
characteristic space associated to $\lambda$.
\end{proof}

\subsection{The lift of hyperbolic representations}\label{ss:hyp}

We keep track here of the image $r_n(\chi(\pi_1(M),\PGL(2,\C)))$ in $\chi$.
As said in the introduction of this section, we will simply denote by
$\chi(\pi_1(M),\PGL(2,\C))$ this image.

\begin{fact}\label{fact:hyp}
A point $[\rho]\in \chi$ close enough to $[\rho_\geom]$ belongs to $\chi(M,\PGL(2,\C))$ if and only if there
is a
lift in the ramified covering $\chi'$ such that, for all
$i$, we have $L_1[\rho]=L_2[\rho]=\ldots =L_{n-1}[\rho]$. In this case,
we also have
$M_1(\rho)=M_2[\rho]=\ldots =M_{n-1}[\rho]$.
\end{fact}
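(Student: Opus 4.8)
The plan is to lift both sides of the claimed equivalence into the ramified covering $\chi'$ and identify them there as germs of $1$-dimensional complex submanifolds at $[\rho_\geom]$. Write $\Delta$ for the locus $\{[\rho,F]\in\chi' : L_1[\rho,F]=\cdots=L_{n-1}[\rho,F]\}$ and $Y$ for the lift of the hyperbolic locus $r_n(\chi(\pi_1(M),\PGL(2,\C)))$, to be constructed below. I would first show $Y\subseteq\Delta$ by a direct computation, then $Y=\Delta$ by a dimension argument resting on Fact~\ref{local-rigidity}, and finally read off the statement by projecting back to $\chi$.

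For the inclusion $Y\subseteq\Delta$, recall that $r_n$ is $\mathrm{Sym}^{n-1}$: if $g\in\PGL(2,\C)$ lifts to $\SL(2,\C)$ with eigenvalues $\mu,\mu^{-1}$ at the fixed points $p_+,p_-\in\C P^1$, then $r_n(g)$ has eigenvalues $\mu^{n-1},\mu^{n-3},\ldots,\mu^{1-n}$, the eigenline of $\mu^{n-1-2j}$ sitting at the $j$-th step of the flag ``towards $p_+$'' in $\mathrm{Sym}^{n-1}(\C^2)=\C^n$. Given $[\sigma]\in\chi(\pi_1(M),\PGL(2,\C))$ close to $[\rho_{\mathrm{hyp}}]$, set $\rho=r_n\circ\sigma$; the abelian group $\sigma(\pi_1(T))$ fixes a point $p_+(\sigma)$ depending holomorphically on $\sigma$ and equal to the parabolic fixed point at $\sigma=\rho_{\mathrm{hyp}}$, so $\rho(\pi_1(T))$ fixes the complete flag $F$ towards $p_+(\sigma)$, which produces a holomorphic lift $[\sigma]\mapsto[\rho,F]$ of $r_n$ into $\chi'$. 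Denoting by $e^{v/2}$ and $e^{u/2}$ the eigenvalues of $\sigma(l)$ and $\sigma(m)$ at $p_+(\sigma)$, the diagonals of $\rho(l)$ and $\rho(m)$ in a basis adapted to $F$ are geometric progressions, whence (after the normalisation of equation~\ref{trig}) $L_1[\rho,F]=\cdots=L_{n-1}[\rho,F]=e^{-v}$ and $M_1[\rho,F]=\cdots=M_{n-1}[\rho,F]=e^{-u}$. This already gives the ``only if'' direction together with the ``moreover'' clause, and shows $Y\subseteq\Delta$.

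For $Y=\Delta$, I would invoke Fact~\ref{local-rigidity}: $(L_1,\ldots,L_{n-1})$ are local parameters on $\chi'$ at $[\rho_\geom]$, so in this chart $\Delta$ is the diagonal line --- a smooth irreducible $1$-dimensional germ. On the other side, $Y$ is the image of the germ of $\chi(\pi_1(M),\PGL(2,\C))$ at $[\rho_{\mathrm{hyp}}]$ under the (injective) lift above; by Thurston \cite{thurston-gt3m} and Neumann--Zagier \cite{neumann-zagier} this germ is a smooth curve (the complete structure admits a nontrivial deformation), so $Y$ is a $1$-dimensional analytic germ. A $1$-dimensional analytic germ contained in a smooth irreducible $1$-dimensional germ must equal it, so $Y=\Delta$. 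Projecting back to $\chi$: a point $[\rho]$ near $[\rho_\geom]$ has a lift in $\chi'$ with $L_1=\cdots=L_{n-1}$ exactly when that lift lies in $\Delta=Y$, i.e. exactly when $[\rho]$ lies in $r_n(\chi(\pi_1(M),\PGL(2,\C)))$; and on $Y$ we moreover have $M_1=\cdots=M_{n-1}$.

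The routine points to verify are internal to the construction of the lift: that $p_+(\sigma)$, hence $F$, depends holomorphically on $\sigma$ and specialises to $F_T$ at $\rho_{\mathrm{hyp}}$ (the $\PGL(2,\C)$-analogue of the discussion around Lemma~\ref{lem:jordan}), that $l$ and $m$ are co-oriented so that the \emph{same} flag $F$ triangularises both $\rho(l)$ and $\rho(m)$ with monotone diagonal --- which is exactly what forces the $M_k$ to coincide once the $L_k$ do --- and that $r_n$ is injective near $[\rho_{\mathrm{hyp}}]$ (from irreducibility of $r_n$, Zariski density of $\sigma$, and self-normalisation of the principal $\PGL(2,\C)$ in $\PGL(n,\C)$). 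The one genuinely non-formal ingredient is the dimension count in the converse: it imports the classical fact that the complete hyperbolic structure has a nontrivial deformation in $\chi(\pi_1(M),\PGL(2,\C))$, so that $Y$ cannot degenerate to the single point $[\rho_\geom]$ inside $\Delta$.
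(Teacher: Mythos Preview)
Your proof is correct and follows the same route as the paper's: compute the diagonal of $r_n$ applied to an upper-triangular element of $\PGL(2,\C)$ to obtain $L_1=\cdots=L_{n-1}=t^{-2}$ (and likewise for the $M_k$), then invoke Fact~\ref{local-rigidity} to say that the $L_k$'s are local coordinates on $\chi'$. The paper compresses the converse into a single clause (``the eigenvalues of $\rho(l)$ determine a point in $\chi'$''), whereas you make explicit the hidden appeal to Thurston's one-dimensionality of $\chi(\pi_1(M),\PGL(2,\C))$ and package it as a comparison of $1$-dimensional germs --- the same argument, just unpacked.
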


\begin{proof}
An easy computation of $$r_n\begin{pmatrix}
t&*\\0&t^{-1}\end{pmatrix}$$
shows that it is an upper triangular matrix with diagonal entries
$$t^{n-1},t^{n-3},\ldots,t^{-(n-3)},t^{-(n-1)}.$$
That is, with the notation of eq. \ref{trig}, $L_1=\ldots=L_{n-1}=t^{-2}$. As locally the eigenvalues of $\rho(l)$ determine a point in $\chi'$, the fact
holds.
\end{proof}

\subsection{Peripheral discreteness}

This paper aims to understand the so-called \emph{peripheral discreteness}: are
the peripheral representations discrete or not? Looking for any
global
result is hopeless: as mentioned in the introduction, the $\PGL(2,\C)$-case is
already understood and shows both non-discreteness (the generic feature in this
dimension) and discreteness (for Dehn surgeries). So we try to understand the
generic behaviour. Precisely, we prove in the $\PGL(n,\C)$-case that
peripheral discreteness becomes the generic behaviour. Let $\mathcal U$ be a
neighbourhood of $[\rho_\geom]$ in $\chi'$ on which $\Hol_\periph$ is
injective and the projection $\chi'\to\chi$ is a ramified covering. Then we have:

\begin{theorem}\label{th:main}
Let $M$ be a complete hyperbolic manifold of dimension $3$ with $1$
cusp. There is a real-analytic subvariety $\mathcal D$ of $\C^{(n-1)}$ of
codimension $\geq 1$ verifying that
for any $[\rho,F]$ in 
$\mathcal U$, with $\Hol_\periph[\rho,F]=(L_k,M_k)_k$, we have:\\
if $(L_k)_k$ lies outside of $\mathcal D$, then the peripheral
representation of $\rho$ is discrete and faithful.
\end{theorem}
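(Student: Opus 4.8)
The plan is to exhibit, for a generic choice of the eigenvalue data $(L_k)_k$, a concrete discrete and faithful representation of the free abelian group $\pi_1(T)=\Z^2$ into the diagonal torus of $\PGL(n,\C)$, and then to organise the ``bad'' choices into a real-analytic subvariety. The key point is that $\pi_1(T)$ is generated by $l$ and $m$, and by fact \ref{local-rigidity} the peripheral representation of $[\rho,F]$ is conjugate to the abelian subgroup of upper-triangular matrices generated by the two matrices in eq. \ref{trig}. Up to a harmless nilpotent correction one may further conjugate this pair, after the deformation, to a genuinely diagonal pair: this is exactly lemma \ref{lem:jordan}, which guarantees that near $\rho_\geom$ the pair $(\rho(l),\rho(m))$ is simultaneously diagonalisable whenever the eigenvalues are distinct, and more generally that each eigenvalue's characteristic space is cyclic. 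So the problem reduces to a lattice question: when is the subgroup of the diagonal torus $(\C^*)^{n-1}$ generated by the two diagonal vectors coming from $(L_k)_k$ and $(M_k)_k$ discrete and of rank $2$?

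First I would set up coordinates: write the diagonal of $\rho(l)$ as $(\mu_0,\dots,\mu_{n-1})$ with $\mu_0=1$ and $\mu_j=L_1\cdots L_j$, and similarly $(\nu_0,\dots,\nu_{n-1})$ for $\rho(m)$; passing to the $\PGL$-quotient kills the overall scalar, so the relevant data live in the quotient torus $(\C^*)^n/\C^* \cong (\C^*)^{n-1}$. Taking logarithms, the subgroup generated by $\rho(l)$ and $\rho(m)$ is discrete in $(\C^*)^{n-1}$ if and only if the two vectors $\log\rho(l),\log\rho(m)\in\C^{n-1}\cong\R^{2(n-1)}$ span, over $\Z$, a discrete subgroup of rank $2$ of $\R^{2(n-1)}$ — i.e. the two real vectors are $\R$-linearly independent (equivalently, not $\R$-proportional) and, if there is a $\Z$-relation, it is generated by a single primitive one which still leaves a discrete image. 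The cleanest sufficient condition, and the one I would aim for, is: the real $2(n-1)$-dimensional vectors $\mathrm{Re}\log\rho(l)\oplus\mathrm{Im}\log\rho(l)$ and the corresponding vector for $m$ are linearly independent over $\R$, AND no nontrivial integer combination $a\log\rho(l)+b\log\rho(m)$ lands in $2\pi i\Z^{n-1}$ unless $a=b=0$. Since $n\ge 3$, the target $\R^{2(n-1)}$ has dimension $\ge 4$, so ``$\R$-linearly independent'' is generic; that is the ``enough room'' motto made precise.

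Next I would translate each failure mode into a real-analytic equation in $(L_k)_k$. The condition ``the two real vectors are $\R$-proportional'' is the vanishing of all $2\times 2$ minors of a $2\times 2(n-1)$ real matrix whose entries are real-analytic functions of $(L_k)_k$ (logarithm is real-analytic near the identity, and by fact \ref{local-rigidity} the $M_k$ are holomorphic functions of the $L_k$ on $\mathcal U$); this already cuts out a proper real-analytic subset, because at a point where, say, $L_1\neq L_2$ one checks by hand using the $\PGL(2)$ computation of fact \ref{fact:hyp} that the two vectors are not proportional — so the locus is not everything, hence has codimension $\ge 1$. The remaining failure modes — existence of a nontrivial $\Z$-relation modulo $2\pi i\Z^{n-1}$ — are a countable union of real-analytic conditions, one for each pair $(a,b)\in\Z^2\setminus\{0\}$ and each lattice vector $k\in\Z^{n-1}$, namely $a\log\rho(l)+b\log\rho(m)=2\pi i k$; each such equation, when not identically satisfied on $\mathcal U$, defines a proper real-analytic subset, and I would argue it is never identically satisfied by again invoking that $L_1$ is a local parameter (fact \ref{local-rigidity}), so the left-hand side is a nonconstant real-analytic function. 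I would then let $\mathcal D$ be the union of the proportionality locus with all these hypersurfaces; a countable union of codimension-$\ge 1$ real-analytic subsets is still a real-analytic subvariety of codimension $\ge 1$ (shrinking $\mathcal U$ if necessary, only finitely many of the $(a,b,k)$ equations meet a fixed compact neighbourhood, because $\log\rho(l),\log\rho(m)$ are bounded there). Outside $\mathcal D$ the peripheral image is, by construction, a discrete subgroup of the diagonal torus isomorphic to $\Z^2$, hence discrete and faithful in $\PGL(n,\C)$.

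The main obstacle I expect is not the linear algebra but the bookkeeping in the degenerate-eigenvalue strata: when some $L_j=1$ the pair $(\rho(l),\rho(m))$ need not be simultaneously diagonalisable, only block-conjugate to a diagonal-plus-nilpotent pair by lemma \ref{lem:jordan}, and one must check that the \emph{peripheral representation} — the actual subgroup of $\PGL(n,\C)$ — is still discrete there, or else simply show that the whole coincidence locus $\{L_j=1 \text{ for some }j\}\cup\{L_i=L_j\}$ is itself contained in a codimension-$\ge 1$ real-analytic set and absorb it into $\mathcal D$. The latter is the safe route and is what I would carry out. A secondary technical point is ensuring that the countably many arithmetic hypersurfaces genuinely assemble into a real-analytic \emph{subvariety} (locally finite), which is handled by the boundedness/compactness remark above; and one should double-check that $\mathcal D$ depends only on $(L_k)_k$ and not on the whole $\Hol_\periph$ data, which holds because the $M_k$ are determined by the $L_k$ on $\mathcal U$.
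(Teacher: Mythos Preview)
Your sufficient condition for discreteness is incorrect, and this is a fatal gap. You claim that if the two vectors $\log\rho(l),\log\rho(m)\in\C^{n-1}\cong\R^{2(n-1)}$ are $\R$-linearly independent and no nontrivial $\Z$-combination lies in $2\pi i\Z^{n-1}$, then the image in $(\C^*)^{n-1}$ is discrete. This is false: the second condition is exactly faithfulness, but $\R$-linear independence of two vectors in $\C^{n-1}$ says nothing about how their $\Z$-span sits relative to the lattice $2\pi i\Z^{n-1}$, so the image in the quotient torus can fail to be discrete. A concrete counterexample is the entire $\PGL(2,\C)$-slice of $\mathcal U$ (fact~\ref{fact:hyp}): there $\log\rho(l)=\log(L)\cdot(1,2,\dots,n-1)$ and $\log\rho(m)=\log(M)\cdot(1,2,\dots,n-1)$, which are $\R$-proportional only when $\log M/\log L\in\R$, i.e.\ almost never near $\rho_\geom$ (the ratio tends to the non-real cusp modulus $\mu$). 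Yet on this slice the peripheral image is conjugate into a one-parameter torus $\C^*\hookrightarrow(\C^*)^{n-1}$ and is known to be non-discrete outside a countable set. So your locus $\mathcal D$ (failure mode 1 $\cup$ failure mode 2) does not contain this slice, contradicting the theorem's conclusion. Your local-finiteness claim for the arithmetic hypersurfaces is also wrong for the same reason: the Dehn-surgery relations $L^pM^q=1$ accumulate at $\rho_\geom$, so infinitely many of those hypersurfaces meet every neighbourhood.

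The paper avoids this trap by projecting further to $(\R^*_{>0})^{n-1}$ via $|\cdot|$: the criterion (fact~\ref{crit_discr}) is that the \emph{real} vectors $(\log|L_k|)_k$ and $(\log|M_k|)_k$ be linearly independent in $\R^{n-1}$, detected by the nonvanishing of a single $2\times2$ minor $\Delta_{k,h}$. This is a genuine sufficient condition (the map to $(\R^*_{>0})^{n-1}$ has no kernel to worry about, and injectivity plus discreteness downstairs pulls back). The locus $\mathcal D$ is then simply the common zero set of finitely many real-analytic functions $\Delta_{k,h}$, with no countable union needed. The substantive work --- which your proposal has no counterpart for --- is showing that some $\Delta_{k,h}$ is not identically zero. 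This is proposition~\ref{tau}: one proves $\log M_k=\tau_k(v)\log L_k$ with $\tau_k(1,\dots,1)$ equal to the cusp modulus $\mu\in\H$, whence $\Delta_{k,h}\sim\mathrm{Im}(\mu)\bigl(\arg(L_k)\log|L_h|-\arg(L_h)\log|L_k|\bigr)$ near $\rho_\geom$, which is not identically zero precisely because $\mathrm{Im}(\mu)\neq0$. Appealing only to fact~\ref{fact:hyp}, as you do, cannot supply this: on the $\PGL(2)$-slice all the $\Delta_{k,h}$ vanish identically.
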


\begin{remark}
The generalisation to $\nu$ cusps is natural: $\mathcal D$ becomes a subvariety of $\C^{(n-1)\nu}$ of codimension $\geq 1$.
\end{remark}

It may seems surprising at first glance. But there is an heuristic evidence for
this result. Indeed, the
peripheral representations are representations of $\Z^2$. When in the
$\PGL(2,\C)$-case, outside of the geometric representation, both the elements
$\rho(l)$ and $\rho(m)$ are
loxodromic and preserve the same geodesic. Hence, this $\Z^2$ naturally embeds
in the stabilizer of this geodesic. The latter is a diagonal subgroup isomorphic to
$\C^*$.
At the end, you get some $\Z^2$ included in $\C^*$. It is seldom discrete.
Now, when the ambient group becomes $\PGL(n,\C)$ with $n\geq 3$, then the group $\Z^2$ is
(generically) mapped inside a diagonal subgroup which is isomorphic to
$(\C^*)^{n-1}$. The higher rank indicates that the generic behaviour should be
the discreteness and faithfulness\footnote{Recall that when $n=2$, outside of
$[\rho_\geom]$ but in a neighbourhood, the peripheral representation is \emph{never} discrete and
faithful.}.

Our proof of the theorem will follow this heuristics. We will prove that, under
the hypothesis of the theorem,  no hidden algebraic relationship
between $\rho(l)$ and $\rho(m)$ prevent the discreteness or the
faithfulness. This proof is completed in section \ref{ss:proof}. Beware however that it is not some general triviality. Indeed, if
we do not work in a neighbourhood of the geometric representation, one may exhibit examples of a strong and simple
relationship. A
counterexample is given by the $8$-knot complement: when looking
at the neighbourhood of another representation $\rho$ whose peripheral
representations are unipotent -- actually
lying inside $\mathrm{PU}(2,1)$ -- such a simple relation holds and prevents
faithfulness, see section \ref{sec:example}.

We conclude this subsection by the criterion for discreteness and faithfulness we will use. It is an elementary fact, when you use the homeomorphism $\C^* \simeq \R^*\times \mathbf S^1$:
\begin{fact}\label{crit_discr}
Let $[\rho,F]$ belongs to $\mathcal U$ and note
$\Hol_\periph[\rho,F]=(L_k,M_k)_{k}$. Then a sufficient condition for
the
restriction of $\rho$ to $\pi_1(T)$ to be discrete and faithful is:
 
There exist $1\leq k<h\leq n-1$ such that 
$$\Delta_{k,h}:=\det\begin{pmatrix} \log |L_k| & \log |L_{h}|\\ \log
|M_k|
& \log
|M_{h}|\end{pmatrix} \neq 0.$$
\end{fact}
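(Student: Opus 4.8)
The plan is to prove Fact~\ref{crit_discr} first, since it reduces Theorem~\ref{th:main} to a statement about the non-vanishing of finitely many real-analytic functions, and then to verify that at $[\rho_\geom]$ at least one of the relevant functions is (in a suitable sense) non-trivial, so that its zero set is a proper real-analytic subvariety. For Fact~\ref{crit_discr}: the peripheral group is $\pi_1(T)\cong\Z^2$, generated by $l$ and $m$. By Lemma~\ref{lem:jordan} and the discussion around eq.~\ref{trig}, for $[\rho,F]\in\mathcal U$ the matrices $\rho(l)$ and $\rho(m)$ are simultaneously trigonalizable with prescribed diagonals, and in particular (using the chosen $\SL(n,\C)$-lift, or equivalently working with $\PGL$) the pair $(\rho(l),\rho(m))$ is conjugate into the group $B$ of upper-triangular matrices; projecting to the diagonal torus $(\C^*)^{n-1}$ gives a homomorphism $\phi:\Z^2\to(\C^*)^{n-1}$. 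The key point is that discreteness and faithfulness of $\rho|_{\pi_1(T)}$ follow from discreteness and faithfulness of this diagonal part together with the unipotent upper-triangular part being trivial on a finite-index subgroup — but more directly, one argues: if $\phi$ is discrete and faithful, then so is $\rho|_{\pi_1(T)}$, because a sequence $\rho(\gamma_j)\to \mathrm{Id}$ with $\gamma_j\in\pi_1(T)$ forces $\phi(\gamma_j)\to \mathrm{Id}$ in $(\C^*)^{n-1}$, hence $\gamma_j=0$ for large $j$; and faithfulness is immediate from faithfulness of $\phi$. So it suffices that $\phi:\Z^2\to(\C^*)^{n-1}$ be discrete and faithful.

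Next I would analyze when $\phi$ is discrete and faithful. Using the coordinates from eq.~\ref{trig}, the $k$-th diagonal ratio of $\rho(l)$ is $L_k$ and of $\rho(m)$ is $M_k$; write $u_k=\log|L_k|$, $v_k=\log|M_k|$, so $\log|\cdot|$ composed with $\phi$ is the linear map $\Z^2\to\R^{n-1}$ sending $(p,q)\mapsto (p\,u_k+q\,v_k)_{k=1,\dots,n-1}$. If there exist $1\le k<h\le n-1$ with $\Delta_{k,h}=u_kv_h-u_hv_k\neq 0$, then already the projection to the $(k,h)$-coordinates is an injective linear map $\R^2\hookrightarrow\R^2$ with discrete image of the lattice $\Z^2$; hence $\log|\phi(\cdot)|$ is injective with discrete image, which forces $\phi$ itself to be injective (kernel maps into the compact torus $(\mathbf S^1)^{n-1}$, but a nontrivial element of the kernel would have $\log|\phi|=0$, impossible) and $\phi$ to have discrete image (a convergent sequence $\phi(\gamma_j)$ has $\log|\phi(\gamma_j)|$ convergent hence eventually constant, and on each such fiber only finitely many $\gamma_j$ can appear after passing to the compact part — actually the fiber of $\log|\phi|$ over a point intersected with the image of $\Z^2$ is a single point). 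This is exactly the content of Fact~\ref{crit_discr}.

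It then remains to show that the set $\mathcal D\subset\C^{n-1}$ of $(L_k)_k$ for which \emph{all} $\Delta_{k,h}$ vanish is contained in a real-analytic subvariety of codimension $\ge 1$. Each $\Delta_{k,h}$ is a real-analytic function of $(L_k,M_k)_k$, and by Fact~\ref{local-rigidity} the $M_k$ are holomorphic functions of the local parameters $(L_k)_k$ near $\Hol_\periph[\rho_\geom]$; so each $\Delta_{k,h}$ becomes a real-analytic function $D_{k,h}$ on a neighbourhood of $(1,\dots,1)$ in $\C^{n-1}$. The finite intersection $\mathcal D=\bigcap_{k<h}\{D_{k,h}=0\}$ is a real-analytic subvariety; it has codimension $\ge 1$ unless some connected component is open, i.e. unless all $D_{k,h}$ vanish identically near $[\rho_\geom]$. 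The main obstacle — and the heart of the argument — is to rule this out: I must exhibit a single deformation direction along which some $\Delta_{k,h}\neq 0$. The natural candidate is to move the $L_k$ in "generic" directions: pick a direction in which $L_k = \exp(\varepsilon\, a_k)$ and watch the induced $M_k=\exp(\varepsilon\, b_k+o(\varepsilon))$; the claim is that the linearization (the derivative of $\Hol_\periph$ at $[\rho_\geom]$, which by Fact~\ref{local-rigidity} is an embedding with image a complex $(n-1)$-plane transverse to the "longitude" coordinates) is not contained in the degenerate locus where all $2\times2$ minors $(u_k v_h - u_h v_k)$ vanish — this degenerate locus being precisely where the real vectors $(u_1,\dots,u_{n-1})$ and $(v_1,\dots,v_{n-1})$ are proportional. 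Here I would use that the image is a \emph{complex} submanifold of complex dimension $n-1\ge 2$: if the image were contained in $\{(u_k),(v_k)\text{ proportional}\}$, one derives a strong constraint (the tangent space, being a complex subspace, would force $L_k\mapsto M_k$ to be, to first order, a real-linear multiple of the identity on the real parts of the logarithms, uniformly in $k$, for all directions), and one checks this contradicts either the known fact that the hyperbolic locus $\{L_1=\dots=L_{n-1}\}$ of Fact~\ref{fact:hyp} is a \emph{proper} submanifold, or the Neumann--Zagier symplectic/nondegeneracy property underlying Theorem~\ref{thm:mf-p}. Thus some $D_{k,h}$ is not identically zero, $\mathcal D$ has codimension $\ge1$, and outside $\mathcal D$ Fact~\ref{crit_discr} applies, completing the proof.
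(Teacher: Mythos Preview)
Your argument for Fact~\ref{crit_discr} itself (your first two paragraphs) is correct and is essentially the paper's proof: reduce to the diagonal projection $\phi:\Z^2\to(\C^*)^{n-1}$, compose with $\log|\cdot|$ to land in $\R^{n-1}$, and note that non-vanishing of some $\Delta_{k,h}$ is precisely linear independence of the two resulting vectors, which forces discreteness and faithfulness. The paper phrases this as a chain of equivalent conditions via $\C^*\simeq\R^*\times\mathbf S^1$; you spell out the implications more carefully, but the content is identical.

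Your third paragraph, however, strays into Theorem~\ref{th:main}, and there the final step has a genuine gap. You propose to rule out identical vanishing of all $D_{k,h}$ by appealing to the complex structure of the image of $\Hol_\periph$, to the properness of the hyperbolic locus, or to ``the Neumann--Zagier symplectic/nondegeneracy property underlying Theorem~\ref{thm:mf-p}'', but none of this is made precise, and in fact such purely structural arguments cannot suffice: the examples in section~\ref{sec:example} (around $[\rho_2]$ and $[\rho_3]$) show locally rigid representations with regular-unipotent peripheral holonomy around which the determinant \emph{does} vanish identically. The paper's actual mechanism is specific to $[\rho_\geom]$: Proposition~\ref{tau} establishes $\log(M_k)=\tau_k(v)\log(L_k)$ with $\tau_k(1,\dots,1)=\mu$ the cusp modulus, and it is the fact $\mathrm{Im}(\mu)\neq 0$ from Theorem~\ref{th:thurston} that makes the second-order expansion of $\Delta_{k,h}$ non-trivial along an explicit curve. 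Without invoking this non-real modulus, your final step does not go through.
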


\begin{proof}
 For any representation $[\rho,F]$ in $\mathcal U$, with
$\Hol_\periph[\rho,F]=(L_k,M_k)_{k}$, the
following are equivalent conditions all implying the discreteness and
faithfulness of the restriction of $\rho$ to $\pi_1(T)$:
\begin{itemize}
\item The vectors $$(L_1,L_1L_2,\ldots, L_1\cdots L_{n-1})$$ and
$$(M_1,M_1M_2,\ldots, M_1\cdots M_{n-1})$$ generate a discrete
subgroup of
$(\C^*)^{n-1}$ isomorphic to $\Z^2$.

\item The vectors $$(|L_1|,|L_1L_2|,\ldots, |L_1\cdots L_{n-1}|)$$ and
$$(|M_1|,|M_1M_2|,\ldots, |M_1\cdots M_{n-1}|)$$ generate a subgroup
of
$(\R^*)^{n-1}$ isomorphic to $\Z^2$.

\item The vectors $$(\log|L_1|,\log|L_1L_2|,\ldots, \log|L_1\cdots
L_{n-1}|)$$ and
$$(\log|M_1|,\log|M_1M_2|,\ldots, \log|M_1\cdots M_{n-1}|)$$ are free
in $\R^{n-1}$.
\end{itemize}
The last point is translated in terms of non vanishing of at least one minor,
that is one of the functions $\Delta_{k,h}$.
\end{proof}

Theorem \ref{th:main} follows easily from this fact, at least if we may find
one single representation $\rho$ for which this determinant does not vanish. The
subvariety $\mathcal D$ is then defined by the vanishing of these determinants. A
nice feature of the proof is that we will exploit what we know about the
$\PGL(2,\C)$-case -- even if in this case the determinants \emph{always} vanish
!

\subsection{Some facts about the $\PGL(2,\C)$-case}\label{ss:thurston}

The main result about the $\PGL(2,\C)$-case goes back to Thurston
\cite{thurston-gt3m} and is explained thoroughly in Neumann-Zagier's paper
\cite{neumann-zagier}. We state it using our notations. Recall from facts
\ref{local-rigidity} and \ref{fact:hyp} that the $\PGL(2,\C)$-case -- seen as a subset of
the ramified covering $\chi'$ -- is characterised by $L_1=\ldots=L_{n-1}$, which implies in turn $M_1=\ldots=M_{n-1}$ for all $i$. Moreover, given
a complex number $L$ close enough to $1$, there is a unique representation
class
$[\rho_L]\in \chi(M,\PGL(2,\C))\subset\chi(M,\PGL(n,\C))$ such that
$L_1[\rho_L]=\ldots=L_{n-1}[\rho_L]=L$. For this
representation, we denote by $M(L)$ the common value of the
$M_k[\rho_L]$. This defines a holomorphic map 
$$L\mapsto M(L).$$
If $L$ is close enough to $1$, then $[\rho_v]$ is close
to
$[\rho_\geom]$ and $M(L)$ is close to $1$. We choose the branch of $\log$
which sends $1$ to $0$. The following theorem states that $\log(M(L))$ is
almost an affine function of $\log(L)$ in the neighbourhood of $1$ 
and its slope is the modulus of the euclidean structure of the torus $T$ in
the hyperbolic structure:

\begin{theorem}[Thurston]\label{th:thurston}
There exists an analytic map $\tau$ defined on a neighbourhood of
$1$ in $\C$ such that we have :
$$\log(M(L))=\tau(L) \log(L).$$
Moreover, $\tau(1)$ belongs to
the upper half-plane $\H$ and is the modulus $\mu$ of the euclidean structure
on $T$ given by the hyperbolic structure on the manifold $M$ (associated to $l,m$).
\end{theorem}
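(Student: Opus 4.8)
The plan is to split the statement into a purely formal part — the existence of an analytic function $\tau$ near $1$ with $\log(M(L)) = \tau(L)\log(L)$ — and its geometric content, namely the evaluation $\tau(1) = \mu \in \H$. The first part is an elementary removable-singularity argument using the facts already proved; the second is exactly Thurston's identification of the cusp shape with the derivative of the holonomy map, which I would reprove in the present normalisation following Neumann--Zagier \cite{neumann-zagier}.

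\medskip

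\noindent\textit{The formal part.} By fact \ref{local-rigidity} together with fact \ref{fact:hyp}, the locus $\{L_1=\cdots=L_{n-1}\}$ in $\chi'$ is, near $[\rho_\geom]$, smoothly parametrised by $L\mapsto[\rho_L]$, and along it $M(L):=M_1[\rho_L]$ is holomorphic with $M(1)=1$ (since $[\rho_1]=[\rho_\geom]$ is regular unipotent, so $M_k[\rho_\geom]=1$). With the chosen branch of $\log$, the function $\log(M(L))$ is therefore holomorphic near $L=1$ and vanishes at $L=1$, while $\log(L)$ is holomorphic there with a simple zero. Hence $\tau(L):=\log(M(L))/\log(L)$ has a removable singularity at $L=1$ and extends to an analytic function on a neighbourhood of $1$, with $\tau(1)=M'(1)$. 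This already gives the displayed identity, so all the substance of the theorem is the value $\tau(1)=\mu$ — in particular that this number is nonzero, non-real, and lands in $\H$ for the chosen orientation and the chosen ordering of $(l,m)$.

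\medskip

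\noindent\textit{The geometric identification.} Translate back to $\PGL(2,\C)$: writing $\rho_L$ for the monodromy class corresponding to $L$, fact \ref{fact:hyp} says the eigenvalue ratio of $\rho_L(l)$ is $L^{-1}$ and that of $\rho_L(m)$ is $M(L)^{-1}$; so, up to the fixed branch choices, $\log(L)$ and $\log(M(L))$ are minus the complex translation lengths $\mathbf{v}(L),\mathbf{u}(L)$ of $\rho_L(l),\rho_L(m)$ along their common axis, whence $\tau(L)=\mathbf{u}(L)/\mathbf{v}(L)$. For $L$ near $1$, $\rho_L$ is the holonomy of an incomplete hyperbolic structure on $M$ (Thurston's deformation space, \cite{thurston-gt3m}), and the cusp cross-section $T$ carries the associated similarity structure; conjugating the two common fixed points to $0,\infty$, its holonomy $\pi_1(T)\to\mathrm{Aff}(\C)$ lands in the scaling subgroup $\C^*$, sending $m\mapsto e^{\mathbf{u}(L)}$ and $l\mapsto e^{\mathbf{v}(L)}$. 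Passing to the logarithmic cover exhibits $T$, as a Euclidean torus up to scale, as $\C/(\Z\,\mathbf{u}(L)+\Z\,\mathbf{v}(L))$, of modulus $\mathbf{u}(L)/\mathbf{v}(L)=\tau(L)$. As $L\to1$ these similarity structures degenerate to the genuine Euclidean structure on $T$ induced by the complete hyperbolic structure: by Thurston's computation the tangent vector to the deformation at $[\rho_\geom]$, restricted to $\pi_1(T)\cong\Z^2$, is the cohomology class representing that Euclidean structure, so $\mathbf{u}(L)$ and $\mathbf{v}(L)$ vanish to first order with derivatives proportional to the common factor times the translation numbers $\omega_m,\omega_l$ of $m$ and $l$ on the complete cusp torus. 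Since the modulus is scale-invariant, $\tau(1)=\lim_{L\to1}\mathbf{u}(L)/\mathbf{v}(L)=\omega_m/\omega_l=\mu$, and as the complete cusp torus is a nondegenerate oriented Euclidean torus with marked basis coming from $(l,m)$, this modulus lies in $\H$.

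\medskip

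\noindent The hard part is precisely this last identification $\tau(1)=\mu$: it is Thurston's cusp-shape lemma (\cite{thurston-gt3m}, \cite{neumann-zagier}) and not a formal consequence of the facts quoted above. Making it self-contained would require either Thurston's hyperbolic Dehn surgery theorem — smooth dependence of the developing map on the deformation parameter, so that the rescaled horospherical cross-sections converge geometrically to the complete cusp and their moduli converge — or the infinitesimal argument of Neumann--Zagier: identify the derivative at $[\rho_\geom]$ of the holonomy with a group $1$-cocycle valued in $\mathfrak{sl}_2(\C)$, restrict it to $\pi_1(T)$, and match the resulting linear functional on $\Z^2$ with the developing map of the cusp's Euclidean structure. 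A secondary but genuinely necessary point is orientation bookkeeping: one must check that the branches of $\log$ and of $z^{1/n}$ fixed before the statement, together with the orientation of $M$ and the ordering of $(l,m)$, make $\mu$ appear rather than $\bar\mu$ or $1/\mu$, so that $\tau(1)$ indeed lies in the upper half-plane. Everything else reduces to the removable-singularity step.
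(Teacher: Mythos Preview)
The paper does not prove this theorem: it is stated as a quoted result of Thurston, with the references \cite{thurston-gt3m,neumann-zagier} doing the work, and no proof environment follows it. So there is nothing to compare your argument against in the paper itself.

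That said, your proof sketch is correct and is essentially the Neumann--Zagier argument. Your ``formal part'' --- the removable-singularity step giving the existence of an analytic $\tau$ with $\tau(1)=M'(1)$ --- is exactly \cite[Lemma 4.1]{neumann-zagier}, which the paper in fact invokes later, in the proof of Proposition~\ref{tau}, to carry out the analogous step for the functions $\tau_k$. Your ``geometric identification'' $\tau(1)=\mu$ is, as you yourself say, Thurston's cusp-shape computation; you correctly flag that this is the substantive input and is not a formal consequence of the facts available in the paper. The orientation/branch bookkeeping you mention is also a genuine point, though in practice it is absorbed into the convention that $(l,m)$ is chosen so that the cusp modulus lies in $\H$.

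In short: your write-up supplies the proof the paper deliberately outsources, and does so along the standard lines.
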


Note that the modulus $\mu$  is \emph{not} a real number. This
will prove useful at the end of the proof. 

In light of fact \ref{crit_discr}, we are rather interested in the dependency of
$\log|M|$ in terms of $\log|L|$. However, for a countable set of
complex numbers $L$, there exist two relatively prime integers $p$ and $q$ such that
$|L|^p|M|^q=1$. Indeed, for any neighbourhood of $1$, for all but a
finite number of relatively prime integers $p$, $q$, one may find an $L$ such that
this relation holds (see \cite[p. 322]{neumann-zagier}). The dependency of these
real parts is really a wild one and it is implied by the tameness of the complex
dependency. 

We will take advantage of that: we will show that the tamed behaviour of
$\log(M)$ with respect to $\log(L)$ generalises to the $\PGL(n,\C)$-case. And this
implies as a counterpart that the real parts dependencies are wild. The
non-vanishing of the determinant of fact \ref{crit_discr} will follow as a
corollary.

\section{Complex analyticity and its real counterpart}

\subsection{Ratios of complex logarithms}

Let $\mathcal V$ be the projection on the space $(L_k)_{k}$
of $\Hol_\periph(\mathcal U)$: those are the possible vectors of eigenvalues of the
longitude for a deformation of $\rho_\geom$. From fact
\ref{local-rigidity}, we know that $\mathcal V$ is a neighbourhood of
$(1,\ldots,1)$ and that to any $v\in \mathcal V$, there exists a unique
point $[\rho_v,F]$ of $\mathcal U$ projecting to $v$.

We define, as in the $\PGL(2,\C)$-case, a map from $\mathcal V$ to
$\C^{(n-1)}$ by $$v=(L_k)_{k}\mapsto (M_k(v):=M_k[\rho_v,F])_{k}.$$
We first generalise the map $\tau$ of the $\PGL(2,\C)$-case. The following
proposition, which reduces to simple linear algebra, is the key point for
theorem \ref{th:main}:
\begin{proposition}\label{tau}
There exist $(n-1)$ applications $\tau_{k}$, holomorphic in a neighbourhood
of $(1,\ldots,1)$ in $\mathcal V$, such that we have, for all
$v=(L_k)_{k}\in \mathcal V$:
$$\log(M_k(v))=\tau_{k}(v) \log(L_k).$$
Moreover, for any $k$,  $\tau_k (1,\ldots,1)$ is the modulus $\mu$ of
the euclidean structure on $T$ induced by the hyperbolic structure on the manifold $M$.
\end{proposition}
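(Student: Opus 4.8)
The plan is to reduce the statement to the $\PGL(2,\C)$-case via fact \ref{fact:hyp}, where we already have Thurston's theorem \ref{th:thurston} at our disposal, and to build the functions $\tau_k$ by a direct linear-algebra argument on the diagonal parts of $\rho(l)$ and $\rho(m)$. First I would observe that for a representation $[\rho_v,F]\in\mathcal U$, the matrices $\rho_v(l)$ and $\rho_v(m)$ are simultaneously upper-triangular in a basis adapted to $F$, so all the relevant data are encoded in their diagonal entries. Writing $\lambda_k = L_1\cdots L_k$ and $\mu_k = M_1\cdots M_k$ for the diagonal entries (with $\lambda_0=\mu_0=1$), both $(\log\lambda_k)_k$ and $(\log\mu_k)_k$ are holomorphic functions of $v$ near $(1,\dots,1)$ (using the branch of $\log$ sending $1$ to $0$), and they vanish at $v=(1,\dots,1)$. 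The claim $\log(M_k(v)) = \tau_k(v)\log(L_k)$ is the assertion that the increment $\log\mu_k - \log\mu_{k-1}$ is a holomorphic multiple of the increment $\log\lambda_k - \log\lambda_{k-1}$.

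The key input is that this holds on the codimension-$(n-2)$ slice $\chi(M,\PGL(2,\C))\subset\chi$: by fact \ref{fact:hyp} that slice is cut out by $L_1=\dots=L_{n-1}$, and there theorem \ref{th:thurston} gives $\log M = \tau(L)\log L$ with the \emph{same} $\tau$ for every index $k$, and $\tau(1)=\mu$. To get the statement on all of $\mathcal V$, I would use the following division argument. Fix $k$ and consider the holomorphic function $g_k(v) := \log M_k(v)$ and $f_k(v):=\log L_k$ on $\mathcal V$. Since $\log L_k$ is one of the local coordinates on $\mathcal V$ (fact \ref{local-rigidity} says $(L_k)_k$, equivalently $(\log L_k)_k$, are local parameters), I may change coordinates so that $f_k$ is literally the $k$-th coordinate function; then the quotient $\tau_k := g_k/f_k$ extends holomorphically across the hyperplane $\{f_k=0\}$ precisely when $g_k$ vanishes on that hyperplane. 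So the heart of the matter is to show: $\log M_k(v)=0$ whenever $\log L_k=0$, i.e. $M_k=1$ whenever $L_k=1$. This is where I expect the main obstacle to lie, and I would handle it by a symmetry/analytic-continuation argument: the Weyl group $\mathfrak S_n$ acts on $\chi'$ permuting the eigenvalues, hence permuting the $L_k$'s and $M_k$'s in a prescribed way, and the subvariety $\{L_k=1\}$ together with the relations coming from theorem \ref{th:thurston} on the diagonal $\{L_1=\dots=L_{n-1}\}$ should force the vanishing of $M_k-1$ on all of $\{L_k=1\}$. Concretely, the pair of characters $l\mapsto L_k$, $m\mapsto M_k$ is a character of $\Z^2$ whose holonomy is a ratio of diagonal entries of a representation of $\pi_1(M)$; one checks by the local-rigidity normal form that when $L_k=1$ the corresponding two consecutive eigenlines of $\rho(l)$ merge into a single Jordan block (as allowed by lemma \ref{lem:jordan}), and the same merging must then occur for $\rho(m)$ since $\rho(l),\rho(m)$ commute and share the flag $F$ — forcing $M_k=1$.

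Once $\tau_k$ is constructed as a holomorphic function near $(1,\dots,1)$, the value $\tau_k(1,\dots,1)$ is computed by restricting to the hyperbolic slice and taking a limit: along $\{L_1=\dots=L_{n-1}=L\}$ we have $\log M_k = \tau_k\log L$ and also $\log M_k = \tau(L)\log L$ from theorem \ref{th:thurston} (every index behaves the same on this slice), so $\tau_k\equiv\tau$ there, and letting $L\to 1$ gives $\tau_k(1,\dots,1)=\tau(1)=\mu$. The remaining bookkeeping — checking holomorphy of all the $\log$-composites, verifying the coordinate change is biholomorphic, and confirming the extension across $\{f_k=0\}$ is genuinely holomorphic and not merely meromorphic — is routine given fact \ref{local-rigidity}, so I would state these as immediate consequences rather than belabour them.
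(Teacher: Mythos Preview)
Your proposal is correct and follows the same route as the paper: reduce the existence of $\tau_k$ to the implication $L_k=1 \Rightarrow M_k=1$, prove that implication by examining the induced $2\times 2$ block of $(\rho(l),\rho(m))$ on $E_{k+1}/E_{k-1}$ and using commutation together with lemma \ref{lem:jordan} (which guarantees the off-diagonal entry of $\rho(l)$ there is nonzero), and then evaluate $\tau_k(1,\dots,1)$ by restricting to the $\PGL(2,\C)$-slice and quoting theorem \ref{th:thurston}.

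Two small points. The Weyl-group/analytic-continuation sentence should be deleted: the diagonal $\{L_1=\cdots=L_{n-1}\}$ meets the hyperplane $\{L_k=1\}$ only at the single point $(1,\dots,1)$, so Thurston's relation on the diagonal carries no information to the rest of $\{L_k=1\}$; your ``Concretely'' clause is the actual argument (and is exactly the paper's). Also, ``the same merging must occur for $\rho(m)$'' is imprecise: commutation with a nontrivial $2\times 2$ unipotent only forces $\rho(m)$ to have equal diagonal entries on that block (hence $M_k=1$), not that $\rho(m)$ is itself a nontrivial Jordan block there.
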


Let us note that the first part will only use the fact that we
have local rigidity around $[\rho_\geom]$ and that $\rho_\geom(l)$ is regular
unipotent (it is a unique Jordan bloc of size $n$). Hence it generalises to
more general settings: for example, when $n=3$, I already
mentioned that it is possible to make actual computations in
order to find some representations of fundamental groups of hyperbolic
$3$-manifolds whose peripheral representations are unipotent. In
\cite{BFG-rigidity} we gave a simple criterion of local rigidity and it is an
easy task to check that the image of $l$ is a unique Jordan block.

\begin{proof}
The proof of the existence of the $\tau_{k}$ is similar to the
$\PGL(2,\C)$-case \cite[Lemma 4.1]{neumann-zagier}: we already know
 from fact \ref{local-rigidity} that
the functions $v\mapsto\log(M_k(v))$ are holomorphic and vanish at
$v=(1,\ldots,1)$. In order to get the existence of the $\tau_{k}$'s, we have
to show that for any $k$, the mere condition $L_k=1$ on the $k$-th
entry of $v$ implies that $M_k=1$ for the same entry. Indeed, this will prove
that the ratio
$\log(M_k(v))/\log(L_k)$ is defined when $L_k=1$. This ratio is the
function $\tau_{k}$. 

Choose some $v\in \mathcal V$ such that, for some $k$, $L_k=1$; let
$[\rho_v,F]$ be the representation class associated to $v$. We choose a basis $(e_1,\ldots e_n)$ of $\C^n$ adapted to the flag $F$. In this basis, the matrices $\rho_v(l)$
and
$\rho_v(m)$ present the upper-triangular form we gave in eq. \ref{trig}. We furthermore choose the basis so that $\rho_v(l)$ is in Jordan form. For $0\leq k\leq n$, let $E_k$ be the
$k$-dimensional subspace of $\C^n$ generated by the first $k$ vectors
$e_1,\ldots,e_k$ (with $E_0=\{0\})$. Define $A$, resp
$B$, to be the endomorphism of the plane $E_{k+1}/E_{k-1}$ given by the action of
$\rho(l)$, resp $\rho(m)$. Then we have, in the basis of $E_{k+1}/E_{k-1}$
given by the projections of $e_k$ and $e_{k+1}$, the following matrices
representing $A$ and $B$ (recall that $L_k=1$):
$$A=L_1\cdots L_{k-1}\begin{pmatrix}1 & x \\ 0&1\end{pmatrix}\textrm{ and
}B=M_1\cdots M_{k-1}\begin{pmatrix}1 & y \\ 0&M_k\end{pmatrix}.$$
Recall from lemma \ref{lem:jordan} that, for $[\rho_v]$ close enough to
$[\rho_\geom]$, the eigenvalue $L_1\cdots L_{k-1}$ is simple. This implies that
$x\neq 0$. But we also know that $\rho_v(l)$ and $\rho_v(m)$ commute. So do
$A$ and $B$. This implies $M_k=1$.

The second point is given by the $\PGL(2,\C)$-case: as seen in sections \ref{ss:hyp} and \ref{ss:thurston}, a point in the
subvariety $L_1=\ldots=L_{n-1}$ -- common value hereafter denoted by $L$ -- corresponds to the representation $\rho_L$  in $\chi(M,\PGL(2,\C))$. Hence we have $M_1=\ldots=M_{n-1}=M$. Hence, for any $k$, the ratio $\tau_k(v)=\log(M_k)/\log(L_k)$  equals the function $\tau(L)=\log(M)/\log(L)$ defined in the hyperbolic case. And for this $\tau$, theorem
\ref{th:thurston} gives $\tau(1)=\mu$.
\end{proof}

\subsection{Proof of the generic discreteness}\label{ss:proof}

We are now ready to complete the proof of theorem \ref{th:main}. The idea is
simple: you cannot have both a nearly linear relation with non real
coefficients between the vectors $(\log(L_k))_k$ and
$(\log(M_k))_k$ and a colinearity (over $\R$) between
$(\log|L_k|)_k$ and $(\log|M_k|)_k$. As before let $v$ be the vector $(L_k)_{k}$ and
define the notation $\log(v)=\max_{k}\left|\log(L_k)\right|$.
We have:
\begin{eqnarray*}
 \log(M_k)&=&\mu
\log(L_k)+o(\log(v))
\end{eqnarray*}
It yields the following approximation for $$\Delta_{k,h}=\det\begin{pmatrix}
\log
|L_k| &
\log |L_{h}|\\ \log |M_k| & \log |M_{h}|\end{pmatrix}:$$
$$\Delta_{k,h}=\mathrm{Im}(\mu)\left(\arg(L_k)\log|L_{
h}|-\arg(L_{h})\log|L_k|\right)+o(\log(v)^2).$$

We may look at a deformation defined, for $t>0$, by $L_k(t)=(1+t)e^{it}$ and
$L_{h}(t)=\overline{L_k}$ (all the other being fixed to $1$). As
$\mathrm{Im}(\mu)\neq 0$, we get the non vanishing of the determinant. This
proves
theorem \ref{th:main}.

\subsection{$[\rho_\textrm{geom}]$ is not a smooth point of the real analytic
subvariety}\label{ss:notsmooth}

From the previous computation, it is clear that for any $k$, $h$, the
differential at $v=(1,\ldots,1)$ of 
$$\Delta_{k,h}=\det\begin{pmatrix} \log |L_k| &
\log |L_{h}|\\ \log |M_k| & \log |M_{h}|\end{pmatrix}$$
vanishes. Thus the local geometry of the subvariety $\mathcal D$ of theorem
\ref{th:main}, i.e. the vanishing locus of all determinants, is not clear. And
it is a simple task to prove that $(1,\ldots,1)$ is indeed a singular point on
this subvariety, as the intersection of different branches: consider the
deformation defined by all entries $L_h$ fixed to $1$, except one of them
(say $L_k$). Then all the $M_h$ are constant equal to $1$ except $M_k$.
And all the determinants vanish.

So, in terms of tangent vectors, any vector with only one non-zero entry is
tangent to the subvariety $\mathcal D$ at $(1,\ldots,1)$. As $\mathcal D$ is not of maximal dimension, it surely shows that $(1,\ldots,1)$ is not a smooth point of this subvariety.

\section{Example : the $8$-knot complement}\label{sec:example}

Building upon the work \cite{BFG}, a team works on
a generalisation the famous computer program Snappea in order to understand
representations in $\PGL(3,\C)$ of knot complements. Some results have already
been mentioned in \cite{BFG-rigidity}. A forthcoming paper by Falbel, Koseleff
and Rouillier will explain thoroughly how some parts of $\chi(M,\PGL(3,\C))$ may
be computed, at least for some $M$, the most worked-out example being the
$8$-knot complement.

Recall the well-known presentation of its fundamental group:
$$\langle g_1,g_3 | [g_3,g_1^{-1}]g_3=g_1[g_3,g_1^{-1}]\rangle.$$

For this manifold, one get a complete list of representation whose peripheral
holonomy is unipotent (see \cite{falbel8} and more recently
\cite{deraux-falbel}, which shows that $\rho_2$ and $\rho_3$ are intimately
related). Up to some Galois
conjugations there are only 4 of them:
\begin{itemize}
  \item The holonomy $[\rho_\geom]$ of the hyperbolic structure on $M$.
  \item $[\rho_1]$ defined on the generators by :
    $$\rho_1(g_1)=\begin{pmatrix}
     1&1&-\frac12 -\frac{i\sqrt{3}}{2}\\ 0&1&-1\\0&0&1
    \end{pmatrix}\textrm{ and }
    \rho_1(g_3)=\begin{pmatrix}
     1&0&0\\ 1&1&0\\-\frac12 -\frac{i\sqrt{3}}{2}&-1&1
    \end{pmatrix}$$
  \item $[\rho_2]$ defined on the generators by :
    $$\rho_2(g_1)=\begin{pmatrix}
     1&1&-\frac12 -\frac{i\sqrt{7}}{2}\\ 0&1&-1\\0&0&1
    \end{pmatrix}\textrm{ and }
    \rho_2(g_3)=\begin{pmatrix}
     1&0&0\\ -1&1&0\\-\frac12 +\frac{i\sqrt{7}}{2}&1&1
    \end{pmatrix}$$
  \item $[\rho_3]$ defined on the generators by :
    $$\rho_3(g_1)=\begin{pmatrix}
     1&1&-\frac12\\ 0&1&-1\\0&0&1
    \end{pmatrix}\textrm{ and }
    \rho_3(g_3)=\begin{pmatrix}
     1&0&0\\ \frac54-\frac{i\sqrt{7}}{4}&1&0\\-1&-\frac54-\frac{i\sqrt{7}}{4}&1
    \end{pmatrix}$$
\end{itemize}

All those representations may be checked to be locally rigid and their peripheral representations to be regular unipotent. Moreover, one may
parametrize a neighbourhood of each of these representations
in $\chi(M,\PGL(3,\C))$ ; beware that the actual computation is not an easy
task and will be thoroughly described elsewhere. It is nevertheless possible to
estimate the determinant $\Delta_{1,2}$ -- the only one to compute, as $n=3$ -- on a neighbourhood of these
representations.

This shows the following behaviour:
\begin{itemize}
 \item In the neighbourhood of $[\rho_\geom]$ the result of this article holds! One may get 
 a bit of additional information: the subvariety $\mathcal D$ is locally diffeomorphic to the isotropic cone of a quadratic form on $\C^2$. It was already suggested by the approximation for $\Delta_{1,2}$ used in the proof of the generic discreteness (section \ref{ss:proof}). 
 \item In the neighbourhood of $[\rho_1]$, the discriminants are not identically
$0$, so the peripheral discreteness is still the generic case.
 \item In the neighbourhood of $[\rho_2]$ and $[\rho_3]$, the determinants
always vanish.
\end{itemize}

Note that, for $\rho_2$ and $\rho_3$ the peripheral representation is not
faithful: for example for $\rho_2$ the following relation holds between
suitable chosen longitude $l$ and meridian $m$: 
$$\rho_2(l)=\rho_2(m)^5.$$
The computation shows that, in this case, not only the determinants vanish in a
neighbourhood of $[\rho_2]$, but for each $[\rho]$ close enough, we do still
have:
$$\rho(l)=\rho(m)^5.$$
In other terms, the relation preventing the faithfulness of the peripheral representation at $\rho_2$ is rigid.

It is tempting to think that, for a general manifold $M$ and a discrete, locally rigid, representation
$\rho$ whose peripheral holonomy is regular unipotent, we should have:

the generic behaviour around $[\rho]$ is the peripheral discreteness if and only
if the peripheral representation is faithful. Moreover, in case it is not, the
relation preventing the faithfulness is rigid.

\bibliography{bibli}
\bibliographystyle{amsalpha}

\end{document}